\newtheorem{thm}{Theorem}\numberwithin{thm}{section}
\numberwithin{cor}{section}
\newtheorem{lem}{Lemma}\numberwithin{lem}{section}
\newtheorem{prop}{Proposition}\numberwithin{prop}{section}
\newtheorem*{prop*}{Proposition}
\newtheorem{rem}{Remark}
\numberwithin{rem}{section}
\numberwithin{equation}{section}
\newcommand{\as}{\text{ as }}
\newcommand{\ninf}{n\rightarrow\infty}
\renewcommand{\l}{\left}
\renewcommand{\r}{\right}
\begin{document}

\title[Spectral density zeroes of discrete Schr\"odinger operator]
{Zeroes of the spectral density of
\\
discrete Schr\"odinger operator with
\\
Wigner-von Neumann potential}

\author{Sergey Simonov}
\address{Chebyshev Laboratory, Department of Mathematics and Mechanics, Saint-Petersburg State University, 14th Line, 29b, Saint-Petersburg, 199178 Russia}
\address{Institut f\"ur Analysis und Scientific Computing, Technische Universit\"at Wien, Wiedner Hauptstrasse, 8--10, Wien, A--1040, Austria}
\email{sergey.a.simonov@gmail.com}

\subjclass{47B36,34E10}

\keywords{Jacobi matrices, Asymptotics of
generalized eigenvectors, Orthogonal polynomials, Discrete Schr\"odinger operator, Wigner-von Neumann potential, pseudogaps}

\date{\today}

\begin{abstract}
We consider a discrete Schr\"odinger operator $\mathcal J$ whose potential is the sum of a Wigner-von Neumann term $\frac{c\sin(2\omega n+\delta)}n$ and a summable term. The essential spectrum of the operator $\mathcal J$ equals to the interval $[-2,2]$. Inside this interval, there are two critical points $\pm2\cos\omega$ where eigenvalues may be situated. We prove that, generically, the spectral density of $\mathcal J$ has zeroes of the power $\frac{|c|}{2|\sin\omega|}$ at these points.
\end{abstract}

\maketitle

\section{Introduction}
In the present paper we consider a discrete Schr\"odinger operator, i.e., a Jacobi matrix
\begin{equation}\label{J}
    \mathcal J=
    \left(%
    \begin{array}{cccc}
    b_1 & 1 & 0 & \cdots \\
    1 & b_2& 1 & \cdots \\
    0 & 1 & b_3 & \cdots \\
    \vdots & \vdots & \vdots & \ddots \\
    \end{array}%
    \right),
\end{equation}
whose diagonal entries (potential) are of the form
\begin{equation}\label{b}
    b_n:=\frac{c\sin(2\omega n+\delta)}n+q_n,
\end{equation}
where $c,\omega,\delta$ are real constants and $\{q_n\}_{n=1}^{\infty}$ is a real-valued sequence such that
\begin{equation}\label{conditions}
    c\neq0,\omega\notin\frac{\pi\mathbb Z}2\text{ and }\{q_n\}_{n=1}^{\infty}\in l^1.
\end{equation}

The operator $\mathcal J$ is a compact perturbation of the free discrete Schr\"odinger operator and therefore, by Weyl's theorem, its essential spectrum equals the interval $[-2,2]$, cf. \cite{Birman-Solomyak-1987}. Moreover, since $\{b_n\}_{n=1}^{\infty}\in l^2$, the interval $(-2,2)$ is covered with absolutely continuous spectrum, cf. \cite{Deift-Killip-1999}. The presence of  the (discrete) Wigner-von Neumann potential $\frac{c\sin(2\omega n+\delta)}n$ with frequency $\omega$ produces two critical (resonance) points, namely, the points $\pm2\cos\omega$ where $\mathcal J$ may have half-bound states or eigenvalues. Here, by a half-bound state we understand a point where a subordinate solution of the eigenfunction equation exists, which does not belong to $l^2$. The spectrum on the rest of the interval $(-2,2)$ is purely absolutely continuous, cf. \cite{Janas-Simonov-2010}.

In the present paper we study the behaviour of the spectral density of the operator $\mathcal J$ near the critical points $\pm2\cos\omega$. Our main result is Theorem \ref{thm main result}, where we show that the spectral density has a zero of the order $\frac{|c|}{2|\sin\omega|}$ at each of these points, unless an eigenvalue or a half-bound state is located there.

Vanishing of the spectral density divides the absolutely continuous spectrum into separate parts and is called pseudogap. The physical meaning of this phenomenon is that the interval of the spectrum near such point contains very few energy levels.

The proof of Theorem \ref{thm main result} is based on two ingredients. The first is a Weyl-Titchmarsh type formula taken from \cite{Janas-Simonov-2010}, which relates the value of the spectral density to the coefficient in the orthogonal polynomials asymptotics (Proposition \ref{prop Janas-Simonov}). The second is the limit behaviour of the solutions of a certain model discrete linear system (Proposition \ref{prop Naboko-Simonov}), which has been studied in \cite{Naboko-Simonov-2011}.

Operators with Wigner-von Neumann potentials \cite{Wigner-von-Neumann-1929} attracted attention of many authors \cite{Albeverio-1972, Matveev-1973, Burd-Nesterov-2010,Lukic-2011}. In \cite{Hinton-Klaus-Shaw-1991} the Weyl function behaviour near the critical points was studied for the differential Schr\"odinger operator on the half-line with an infinite sum of Wigner-von Neumann terms in the potential. The spectral density is proportional to the boundary value of imaginary part of the Weyl function. Hence the object under consideration in \cite{Hinton-Klaus-Shaw-1991} is the same as in the present paper. The questions addressed in \cite{Hinton-Klaus-Shaw-1991} were also studied by a different method in \cite{Behncke-1991-I,Behncke-1991-II,Behncke-1994}. In \cite{Klaus-1991} all the possible cases for such potentials were considered (bound state or half-bound state).

In \cite{Kurasov-Simonov-2011,Naboko-Simonov-2011} (following \cite{Kurasov-Naboko-2007}) we considered the Schr\"odinger operator with a potential which is the sum of Wigner-von Neumann part, a summable part, and a periodic background part. There we have proposed a new approach based on the study of the model discrete linear system \eqref{model system}. We consider that the main advantage of this approach is that the result can be formulated as a theorem concerning the model system. This leads to a greater universality of the method. In the present paper we show that it is applicable to the Jacobi matrix case and allows to use the same theorem as in \cite{Naboko-Simonov-2011}. We plan to consider other applications of this method, for instance, the Schr\"odinger operator on the half-line with point interactions supported by a lattice (Kronig-Penney model) with the sequence of interacting centers or strengths of interaction perturbed by a sequence of Wigner-von Neumann potential form \cite{Lotoreichik-Simonov-2012}.

The paper is organised as follows. In Section \ref{section preliminaries} we define the operator $\mathcal J$ and recall the Weyl-Titchmarsh type formula for its spectral density. In Section \ref{section reduction} we transform the eigenfunction equation to a form of the model discrete linear system \eqref{system for v-hat+}. In Section \ref{section Naboko-Simonov} we recall the results about this system which were obtained in \cite{Naboko-Simonov-2011}. In Section \ref{section final result} we prove our main result, Theorem \ref{thm main result}.

\section{Preliminaries}\label{section preliminaries}
The operator $\mathcal J$ acts in the Hilbert space $l^2$ of square summable complex-valued sequences by the rule
\begin{equation}\label{action of J}
    \begin{array}{l}
    (\mathcal{J}u)_1=b_1u_1+u_2, \\
    (\mathcal{J}u)_n=u_{n-1}+b_nu_n+u_{n+1},\ n\ge2. \\
    \end{array}
\end{equation}
on the domain
\begin{equation*}
    \mathcal D(\mathcal J)=\{u\in l^2:\text{ the result of \eqref{action of J} is in }l^2\}
\end{equation*}
(maximal domain) and is self-adjoint \cite{Akhiezer-1965}. It has a matrix representation of the form \eqref{J} in the canonical base of $l^2$.
Eigenfunction equation for $\mathcal J$ is the following three-term recurrence relation:
\begin{equation}\label{spectral equation prelim}
    u_{n-1}+b_nu_n+u_{n+1}=\lambda u_n,\ n\ge2,
\end{equation}
and its solutions are called generalized eigenvectors. One of these solutions is formed of polynomials $P_n(\lambda)$ which additionally satisfy the "first line" equation: $b_1u_1+u_2=\lambda u_1$, and are defined by conditions $P_1(\lambda)=1,P_2(\lambda)=\lambda-b_1$. There exists a measure $\rho$ such that polynomials $P_n(\lambda)$, $n\in\mathbb N$, form an orthogonal base in the space $L_2(\mathbb R,\rho)$. Moreover, the operator $\mathcal J$ is unitarily equivalent to the operator of multiplication by an independent variable in this space, and so the measure $\rho$ is called the spectral measure of $\mathcal J$. The derivative of the spectral measure $\rho'$ is called the spectral density and is the main object of our interest.

The spectral density of the discrete Schr\"odinger operator with summable potential can be expressed in terms of the asymptotics as $\ninf$ of its orthogonal polynomials by the Weyl-Titchmarsh (Kodaira) type formula. The classical Weyl-Titchmarsh formula deals with the differential Schr\"odinger operator on the half-line with summable potential \cite[Chapter 5]{Titchmarsh-1946-1}, \cite{Kodaira-1949}. For the operator $\mathcal J$ considered in this paper (and actually for a larger class of discrete Scr\"odinger operators with non-summable potentials) analogue of it was obtained in \cite{Janas-Simonov-2010} and is given by the following statement. Define the new variable $z$ as follows:
\begin{equation*}
    \lambda=z+\frac1z\text{ and }z=\frac{\lambda+i\sqrt{4-\lambda^2}}2.
\end{equation*}
The interval $[-2,2]$ of the variable $\lambda$ corresponds to the upper half of the unit circle of the variable $z$.

    \begin{prop}[Janas-Simonov]\label{prop Janas-Simonov}
    Let $\mathcal J$ be the discrete Schr\"odinger operator with the potential $\{b_n\}_{n=1}^{\infty}$ given by \eqref{b} and let conditions \eqref{conditions} hold. Then there exists a continuous function $F:\mathbb T\backslash\{1,-1,e^{\pm i\omega},-e^{\pm i\omega}\}\rightarrow\mathbb C$ such that orthogonal polynomials associated to $\mathcal J$ have the following asymptotics for  $\lambda\in(-2,2)\backslash\{\pm2\cos\omega\}$:
    \begin{equation*}
    P_n(\lambda)=\frac{zF(z)}{1-z^2}\cdot\frac1{z^n}+\frac{z\overline{F(z)}}{z^2-1}\cdot z^n+o(1)\text{ as }n\rightarrow\infty.
    \end{equation*}
    Function $F$ does not vanish on $\mathbb T\backslash\{1,-1,e^{\pm i\omega},-e^{\pm i\omega}\}$. Spectrum of $\mathcal J$ is purely absolutely continuous on $(-2;2)\backslash\{\pm2\cos\omega\}$. The spectral density of $\mathcal J$ equals:
    \begin{equation}\label{Weyl-Titchmarsh type formula}
        \rho'(\lambda)=\frac{\sqrt{4-\lambda^2}}{2\pi|F(z)|^2},\ \lambda\in(-2;2).
    \end{equation}
    \end{prop}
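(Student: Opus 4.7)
The plan is to construct Jost-type solutions $\phi_n^\pm(z)$ of the eigenfunction equation \eqref{spectral equation prelim} that behave like $z^{\pm n}$ at infinity, to expand the polynomial solution $P_n(\lambda)$ in the basis $\{\phi_n^+,\phi_n^-\}$, and then to read off both $F$ and the spectral density from the expansion coefficients. The pattern follows the continuous half-line Weyl--Titchmarsh theory; the novelty is handling the non-summable Wigner--von Neumann term by a discrete averaging step before Levinson's theorem applies.

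To construct $\phi_n^\pm$ I would rewrite \eqref{spectral equation prelim} as a first-order $2\times 2$ discrete system for $\binom{u_n}{u_{n+1}}$ and conjugate by the diagonalizer of the free transfer matrix, whose eigenvalues are $z$ and $z^{-1}$. The resulting perturbation contains the summable contribution from $q_n$ together with a non-summable part coming from the Wigner--von Neumann term, modulated by oscillating factors. For $z\in\mathbb T\setminus\{1,-1,e^{\pm i\omega},-e^{\pm i\omega}\}$ a discrete Harris--Lutz averaging transformation $I+R_n(z)$, whose denominators (of the form $1-z^{\pm 2}e^{\pm 2i\omega}$) are bounded away from zero precisely on this set, removes the non-summable oscillations and leaves an $l^1$ residual. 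Levinson's theorem for discrete linear systems then yields $\phi_n^\pm(z)=z^{\pm n}(1+o(1))$, depending continuously on $z$.

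Given $\phi_n^\pm$, I would expand $P_n(\lambda)=\alpha(z)\phi_n^+(z)+\beta(z)\phi_n^-(z)$. Reality of $P_n$ on $(-2,2)$ together with $\phi_n^-=\overline{\phi_n^+}$ on the upper unit semicircle forces $\beta=\overline{\alpha}$, and matching with the announced asymptotic form identifies $F$ through $\alpha(z)=z\overline{F(z)}/(z^2-1)$. Continuity of $F$ is inherited from the continuous dependence of the construction on $z$. Non-vanishing of $F(z_0)$ on the allowed set is equivalent to linear independence of $P_n$ from $\phi_n^+$ at $\lambda_0=z_0+z_0^{-1}$; if $F(z_0)$ vanished then the leading terms of the asymptotics of $P_n(\lambda_0)$ would cancel and $P_n(\lambda_0)$ would be bounded, forcing $\lambda_0$ to be either an eigenvalue or a half-bound state, which by the analysis in \cite{Janas-Simonov-2010} does not occur away from the two critical points.

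Finally, for the Weyl--Titchmarsh formula I would use $\rho'(\lambda)=\pi^{-1}\Im m(\lambda+i0)$ with the $m$-function $m(\lambda)=-\langle\delta_1,(\mathcal J-\lambda)^{-1}\delta_1\rangle$. Expanding the $l^2$ solution $Q_n+m(\lambda+i\varepsilon)P_n$ (with $Q_n$ the second-kind polynomial) in the basis $\phi_n^\pm$ and requiring that the coefficient of the growing solution vanish determines $m$ as a ratio of connection coefficients; letting $\varepsilon\downarrow 0$, using that the Wronskian of $\phi_n^\pm$ is proportional to $z-z^{-1}=i\sqrt{4-\lambda^2}$, and taking imaginary part, one arrives at $\rho'(\lambda)=\sqrt{4-\lambda^2}/(2\pi|F(z)|^2)$. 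The main technical obstacle is carrying out the Harris--Lutz averaging and subsequent Levinson estimates with control \emph{uniform} in $z$ on compact subsets of the allowed set, which is what ensures that $F$ is a genuinely continuous function up to (but not including) the excluded points.
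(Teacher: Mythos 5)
The present paper does not prove this proposition: it is quoted from \cite{Janas-Simonov-2010} and used as a black box, so there is no internal proof to compare against. Your strategy --- diagonalize the free transfer matrix, remove the non-summable Wigner--von Neumann oscillations by a discrete Harris--Lutz substitution whose denominators stay away from zero off the resonance set, apply the Benzaid--Lutz discrete Levinson theorem to obtain Jost solutions $\phi_n^{\pm}\sim z^{\pm n}$, expand $P_n$ in that basis, and extract $\rho'$ from the $m$-function --- is precisely the route of the cited source, and it is mirrored in Section 3 of the present paper for the critical points themselves. The identification of $F$ through $\beta=\overline{\alpha}$ is correct.

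Two points need repair. First, the non-vanishing argument: if $F(z_0)=0$ the asymptotic formula gives $P_n(\lambda_0)=o(1)$, not merely ``bounded'' ($P_n$ is bounded at every non-critical $\lambda\in(-2,2)$ regardless of $F$); the correct conclusion is that a solution which is $o(1)$ in a two-dimensional solution space spanned by $\phi_n^{\pm}$ with $|\phi_n^{\pm}|\to1$ must vanish identically, contradicting $P_1=1$. No appeal to eigenvalues or half-bound states is needed, and none could occur at these points anyway. Second, and more seriously, the step $\rho'(\lambda)=\pi^{-1}\Im\, m(\lambda+i0)$ presupposes that the Jost solutions, and hence $m$, extend continuously to the circle from inside the disc. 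Your Harris--Lutz/Levinson construction as described lives only on $\mathbb T$; for $|z|<1$ the exponentials $z^{n}$ and $z^{-n}$ have different moduli, so the Levinson dichotomy condition and the convergence of the Harris--Lutz sums must be re-established uniformly on a complex region whose closure meets the circle. That uniform-up-to-the-boundary control is the real technical content of the Weyl--Titchmarsh type formula in \cite{Janas-Simonov-2010}; without it one obtains an asymptotic expansion of $P_n$ on the circle but not the identification of $|F|^{-2}$ with the density of the spectral measure. Relatedly, the purely absolutely continuous nature of the spectrum on $(-2,2)\backslash\{\pm2\cos\omega\}$, which is part of the statement, is left unaddressed; it follows from the bounded, non-subordinate solution asymptotics via subordinacy theory, but that step should be made explicit.
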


The Weyl-Titchmarsh type formula \eqref{Weyl-Titchmarsh type formula} will be the main tool in our analysis of the behaviour of the spectral density.

\section{Reduction of the eigenfunction equation to the model problem}\label{section reduction}


In this section we transform the eigenfunction equation for $\mathcal J$ rewriting it as a discrete linear system in $\mathbb C^2$ and reducing it to the model system of a simple form, which was studied in \cite{Naboko-Simonov-2011}. As the result we will be able to control the spectral density of $\mathcal J$ by a reformulation of Proposition \ref{prop Janas-Simonov} above in terms of a certain solution of the model system. As a byproduct we establish the asymptotic behavior of generalized eigenvectors at the critical points.

Consider for $\lambda\in(-2;2)$ the eigenfunction equation for $\mathcal J$
\begin{equation}\label{spectral equation}
    u_{n-1}+b_nu_n+u_{n+1}=\lambda u_n,\ n\ge2.
\end{equation}
Write it in the vector form,
\begin{equation}\label{spectral equation, vector form}
    \left(%
        \begin{array}{c}
        u_n \\
        u_{n+1} \\
        \end{array}%
    \right)
    =\left(%
        \begin{array}{cc}
        0 & 1 \\
        -1 & \lambda-b_n \\
        \end{array}%
    \right)
    \left(%
        \begin{array}{c}
        u_{n-1} \\
        u_n \\
        \end{array}%
    \right),
    \ n\ge 2.
\end{equation}
Consider a new parameter $\phi$ such that $\lambda=2\cos\phi$.
Variation of parameters in the form
\begin{equation*}
    \left(%
    \begin{array}{c}
    u_n \\
    u_{n+1} \\
    \end{array}%
    \right)
    =
    \left(%
    \begin{array}{cc}
    e^{-i\phi n} & e^{i\phi n} \\
    e^{-i\phi(n+1)} & e^{i\phi(n+1)} \\
    \end{array}%
    \right)
    v_n
\end{equation*}
in the system \eqref{spectral equation, vector form} leads to an equivalent system
\begin{equation}\label{system for v}
    v_{n+1}=M_n(\phi)v_n,\ n\ge1,
\end{equation}
with the coefficient matrix
\begin{multline}\label{M}
    M_n(\phi):=I+\frac{b_{n+1}}{2i\sin\phi}
    \left(%
    \begin{array}{cc}
    1 & e^{2i\phi(n+1)} \\
    -e^{-2i\phi(n+1)} & -1 \\
    \end{array}%
    \right)
    \\
    =I+V^{(1)}_n(\phi)+R^{(1)}_n(\phi),
\end{multline}
where
\begin{multline}\label{V-1}
    V^{(1)}_n(\phi):=
    \frac{c\sin(2\omega(n+1)+\delta)}{2i(n+1)\sin\phi}
    \left(%
    \begin{array}{cc}
    1 & 0 \\
    0 & -1 \\
    \end{array}%
    \right)
    \\
    +
    \frac{c}{4(n+1)\sin\phi}
    \left(%
    \begin{array}{cc}
    0 & e^{i(2(\phi-\omega)(n+1)-\delta)} \\
    e^{-i(2(\phi-\omega)(n+1)-\delta)} & 0 \\
    \end{array}%
    \right)
    \\
    -
    \frac{c}{4(n+1)\sin\phi}
    \left(%
    \begin{array}{cc}
    0 & e^{i(2(\phi+\omega)(n+1)+\delta)} \\
    e^{-i(2(\phi+\omega)(n+1)+\delta)} & 0 \\
    \end{array}%
    \right)
\end{multline}
and
\begin{equation}\label{R-1}
    R^{(1)}_n(\phi):=\frac{q_{n+1}}{2i\sin\phi}
    \left(%
    \begin{array}{cc}
    1 & e^{i\phi(n+1)} \\
    -e^{-i\phi(n+1)} & -1 \\
    \end{array}%
    \right).
\end{equation}
The following theorem gives asymptotics of generalized eigenvectors of $\mathcal J$ for different values of the spectral parameter belonging to the interval $(-2,2)$.

    \begin{thm}\label{thm asymptotics of GEV}
    Let $\mathcal J$ be the discrete Schr\"odinger operator with the potential $\{b_n\}_{n=1}^{\infty}$ given by \eqref{b}, $\{q_n\}_{n=1}^{\infty}$ be real-valued sequence such that $\{q_n\}_{n=1}^{\infty}\in l^1$ and let the conditions \eqref{conditions} hold. Then for every $\lambda\in(-2,2)$ there exists a base $u^+(\lambda)$ and $u^-(\lambda)$ of generalized eigenvectors of $\mathcal J$ with the following asymptotics as $\ninf$.
    \\
    1. For $\lambda=2\cos\omega$
    \begin{equation}\label{asymptotics at omega}
    \begin{array}{l}
        u^+_n(\lambda)=n^{\frac{c}{4\sin\omega}}(\cos(\omega n+\delta/2)+o(1)),
        \\
        u^-_n(\lambda)=n^{-\frac{c}{4\sin\omega}}(\sin(\omega n+\delta/2)+o(1)).
    \end{array}
    \end{equation}
    2. For $\lambda=-2\cos\omega$
    \begin{equation}\label{asymptotics at -omega}
    \begin{array}{l}
        u^+_n(\lambda)=(-1)^nn^{\frac{c}{4\sin\omega}}(\sin(\omega n+\delta/2)+o(1)),
        \\
        u^-_n(\lambda)=(-1)^nn^{-\frac{c}{4\sin\omega}}(\cos(\omega n+\delta/2)+o(1)).
    \end{array}
    \end{equation}
    3. For $\lambda=2\cos\phi\in(-2;2)\backslash\{\pm2\cos\omega\}$
    \begin{equation*}
    \begin{array}{l}
        u^+_n(\lambda)=\exp(i\phi n)+o(1),
        \\
        u^-_n(\lambda)=\exp(-i\phi n)+o(1).
    \end{array}
    \end{equation*}
    \end{thm}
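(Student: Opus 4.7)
The plan is to reduce the vector system \eqref{system for v} to the form covered by Proposition \ref{prop Naboko-Simonov}, then to read off the asymptotics by inverting the chain of transformations. Fix $\phi\in(0,\pi)$ and observe from \eqref{V-1} that $V^{(1)}_n(\phi)$ is a sum of terms of the shape $\frac{e^{i\alpha(n+1)}}{n+1}A_{\alpha}$ with frequencies $\alpha\in\{\pm 2\omega,\pm 2(\phi-\omega),\pm 2(\phi+\omega)\}$ and constant coefficient matrices $A_{\alpha}$. For each $\alpha$ with $\alpha\not\equiv 0\pmod{2\pi}$, Abel summation yields a matrix $T^{(\alpha)}_n:=-\sum_{m=n}^{\infty}\frac{e^{i\alpha(m+1)}}{m+1}A_{\alpha}$ which is $O(1/n)$ and satisfies $T^{(\alpha)}_{n+1}-T^{(\alpha)}_n=\frac{e^{i\alpha(n+1)}}{n+1}A_{\alpha}$. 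Let $T_n$ be the sum of $T^{(\alpha)}_n$ over the non-resonant frequencies and set $v_n=(I+T_n)w_n$. The hypothesis $\omega\notin\frac{\pi\mathbb Z}{2}$ ensures that $\pm 2\omega$ are always non-resonant, and away from the critical points so are $\pm 2(\phi\pm\omega)$. Expanding $(I+T_{n+1})^{-1}M_n(\phi)(I+T_n)$ and using $T_nT_{n+1}=O(1/n^2)$, $T_{n+1}V^{(1)}_n=O(1/n^2)$ and $R^{(1)}_n\in l^1$ rewrites \eqref{system for v} as
\begin{equation*}
    w_{n+1}=\l(I+\frac{A(\phi)}{n+1}+\widetilde R_n(\phi)\r)w_n,
\end{equation*}
where $\widetilde R_n(\phi)$ is matrix-valued and in $l^1$ and $A(\phi)$ collects the zero-frequency (resonant) part of $V^{(1)}_n(\phi)$.

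In the non-resonant case (3), $A(\phi)=0$ and a standard Levinson-type argument yields a basis $w^{\pm}_n\to (1,0)^T,(0,1)^T$; the variation-of-parameters formula $u_n=e^{-i\phi n}v^{(1)}_n+e^{i\phi n}v^{(2)}_n$ combined with $v_n=w_n+O(1/n)$ gives $u^{\pm}_n=e^{\pm i\phi n}+o(1)$. In case (1), $\phi=\omega$ makes $2(\phi-\omega)=0$ the sole resonant frequency, and the corresponding piece of \eqref{V-1} contributes
\begin{equation*}
    A(\omega)=\frac{c}{4\sin\omega}\l(
    \begin{array}{cc}
    0 & e^{-i\delta}\\
    e^{i\delta} & 0
    \end{array}\r),
\end{equation*}
whose eigenvalues are $\mu_{\pm}=\pm\frac{c}{4\sin\omega}$ with eigenvectors $\psi_{\pm}=(e^{-i\delta/2},\pm e^{i\delta/2})^T$. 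Proposition \ref{prop Naboko-Simonov} then provides solutions $w^{\pm}_n=n^{\mu_{\pm}}(\psi_{\pm}+o(1))$; the identities $e^{-i(\omega n+\delta/2)}+e^{i(\omega n+\delta/2)}=2\cos(\omega n+\delta/2)$ and $e^{-i(\omega n+\delta/2)}-e^{i(\omega n+\delta/2)}=-2i\sin(\omega n+\delta/2)$ then yield \eqref{asymptotics at omega} after absorbing the scalar factors into the normalization. Case (2) is identical up to the substitution $\phi=\pi-\omega$: the last summand of \eqref{V-1} becomes resonant instead, and the identity $e^{\pm i(\pi-\omega)n}=(-1)^ne^{\mp i\omega n}$ produces the prefactor $(-1)^n$ in \eqref{asymptotics at -omega} while interchanging the roles of $\cos$ and $\sin$.

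The main technical obstacle is the bookkeeping required to verify that $\widetilde R_n(\phi)\in l^1$. Expanding the conjugation $(I+T_{n+1})^{-1}M_n(\phi)(I+T_n)$ generates many cross terms; each of them is either $O(1/n^2)$ (any product involving two $O(1/n)$ factors), or itself an $l^1$-sequence (any product involving $R^{(1)}_n$), or one of the oscillating summands of $V^{(1)}_n(\phi)$ that is cancelled by the telescoping identity $T^{(\alpha)}_{n+1}-T^{(\alpha)}_n$ by construction of $T_n$. Once this bookkeeping is complete, the application of Proposition \ref{prop Naboko-Simonov} is immediate, and the inverse transformation $I+T_n$ together with the variation-of-parameters matrix multiplies the leading term $n^{\mu_{\pm}}\psi_{\pm}$ only by a $1+O(1/n)$ correction, which is negligible against the polynomial factor.
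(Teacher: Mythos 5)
Your argument follows essentially the same route as the paper: write the eigenfunction equation in vector form, apply variation of parameters, split $V^{(1)}_n(\phi)$ into a resonant zero-frequency part $\frac{A(\phi)}{n}$ and oscillating non-resonant parts, remove the latter, and read off the asymptotics from the eigenvalues $\pm\frac{c}{4\sin\omega}$ and eigenvectors $(e^{-i\delta/2},\pm e^{i\delta/2})^T$ of the resonant matrix. The paper compresses the removal step into a single citation of the discrete Levinson theorem of Benzaid--Lutz (Theorem 3.2 of \cite{Benzaid-Lutz-1987}), which directly allows one to neglect a conditionally summable $l^2$ perturbation; you instead carry out the Harris--Lutz transformation explicitly, which is the standard proof of that reduction and is fine. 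Your identification of the resonant matrices and the trigonometric bookkeeping in cases (1) and (2), including the $(-1)^n$ factor from $e^{\pm i(\pi-\omega)n}$, all check out against the paper.

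The one real weak point is the tool you invoke to finish: Proposition \ref{prop Naboko-Simonov} at $\varepsilon=0$ only asserts that $\lim_{\ninf} x_n(0,f)/n^{\beta}$ exists and that the limit map has rank one. That produces the dominant solution $w^+_n=n^{\beta}(\psi_++o(1))$, but for $f$ in the kernel it gives only $x_n(0,f)=o(n^{\beta})$ --- it does not give the subordinate asymptotics $w^-_n=n^{-\beta}(\psi_-+o(1))$ needed for the second half of \eqref{asymptotics at omega} and \eqref{asymptotics at -omega} (and it is stated for $\beta>0$, whereas your $\beta=\frac{c}{4\sin\omega}$ may be negative). To obtain the full basis with both growth rates you need the dichotomy form of the discrete Levinson theorem applied to the system $w_{n+1}=(I+\frac{A(\phi)}{n}+\widetilde R_n(\phi))w_n$ with $A(\phi)$ having distinct real eigenvalues; this is exactly \cite[Theorem 3.2]{Benzaid-Lutz-1987}, the reference the paper uses. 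With that substitution the proof is complete.
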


   \begin{rem}
   A similar result is obtained in, e.g.,  \cite{Nesterov-2010} by the method of averaging (and \cite{Janas-Simonov-2010} for the third case). Note that in $(-2,2)\backslash\{\pm2\cos\omega\}$ eigenfunction equation is elliptic (i.e., all its solutions have the same order of magnitude), while at the critical points $\pm2\cos\omega$ it is hyperbolic (the orders of two solutions are different).
    \end{rem}

\begin{proof}
For $\lambda\in(-2,2)$ the transfer matrix \eqref{M} has the form
\begin{equation*}
    M_n(\phi)=I+\frac{c}{4n\sin\omega}X_j+V_{j,n}(\phi),
\end{equation*}
where $\{V_{j,n}(\phi)\}_{n=1}^{\infty}$ is some conditionally summable matrix-valued sequence which belongs to $l^2$ and $X_j$ is a constant matrix, which is different in the three cases under consideration:
\\
1. $\lambda=2\cos\omega$ or $\phi=\omega$: $X_1=
\left(
  \begin{array}{cc}
    0 & e^{-i\delta} \\
    e^{i\delta} & 0 \\
  \end{array}
\right)$.
\\
2. $\lambda=-2\cos\omega$ or $\phi=\omega+\pi$: $X_2=
\left(
  \begin{array}{cc}
    0 & -e^{-i\delta} \\
    -e^{i\delta} & 0 \\
  \end{array}
\right)$.
\\
3. $\lambda\in(-2,2)\backslash\{\pm2\cos\omega\}$ or $\phi\in(0,\pi)\backslash\{\omega\mod\pi,\pi-(\omega\mod\pi)\}$: $X_3=0$.
\\
By \cite[Theorem 3.2]{Benzaid-Lutz-1987} (a version of the discrete Levinson theorem) we can neglect the term $V_n(\phi)$ (i.e., solutions of the system \eqref{system for v} have the same asymptotics as solutions of the analogous system without this term). Hence in all three cases $j=1,2,3$ the system $v_{n+1}=M_n(\phi)v_n$ has a base of solutions $v^{(1)}_n$ and $v^{(2)}_n$ with the following asymptotics as $\ninf$:
\begin{equation*}
    v^{(1)}_n=n^{\frac{c\mu_1(X_j)}{4\sin\omega}}\left(\overrightarrow x_j^{(1)}+o(1)\right),
\end{equation*}
and
\begin{equation*}
    v^{(2)}_n=n^{\frac{c\mu_2(X_j)}{4\sin\omega}}\left(\overrightarrow x_j^{(2)}+o(1)\right),
\end{equation*}
where $\mu_1(X_j),\mu_2(X_j)$ are the eigenvalues of matrices $X_j$ and $\overrightarrow x_j^{(1)},\overrightarrow x_j^{(2)}$ are the corresponding eigenvectors. Returning to the solution $u_n$ by the equality $u_n=e^{-i\phi n}(v_n)_1+e^{i\phi n}(v_n)_2$ we complete the proof (by $(v_n)_1$ and $(v_n)_2$ we denote two components of the vector $v_n\in\mathbb C^2$).
\end{proof}

From now on we will use the expression $c\sin(2\omega n+\delta)$ in the rewritten form $$c\sin(2\omega n+\delta)=|c|\sin(2\omega_1n+\delta_1)$$ with \begin{equation}\label{delta-1}
\delta_1:=\delta+\frac{\pi}2(\text{sign}\,c-1)
\end{equation}
and
\begin{equation}\label{omega-1}
\omega_1:=\omega-\pi\left\lfloor\frac{\omega}{\pi}\right\rfloor\in(0,\pi),
\end{equation}
where $\lfloor\cdot\rfloor$ denotes the standard floor function ($\lfloor x\rfloor$ is the greatest integer which is less than $x$). Now we can fix the range of the variable $\phi\in(0,\pi)$ corresponding to $\lambda\in(-2,2)$, so that $z=e^{i\phi}$.

The term $V^{(1)}_n(\phi)$ for $\phi\neq\omega_1,\pi-\omega_1$ is conditionally summable and belongs to $l^2$. As Theorem \ref{thm asymptotics of GEV} shows, this term does not affect the type of asymptotics of solutions (this leads to the preservation of the absolutely continuous spectrum and was considered in detail in \cite{Janas-Simonov-2010}). The values $\phi=\omega_1,\pi-\omega_1$ correspond to $\lambda=\pm2\cos\omega$, i.e., to the resonance points. At these points the term
$V^{(1)}_n(\phi)$ is not summable even conditionally and the type of solutions asymptotics is different.

As Proposition \ref{prop Janas-Simonov} and the forumla \eqref{system for v} suggest, the spectral density is related to the asymptotic behavior of the solution to the system $v_{n+1}=M_n(\phi)v_n$, which corresponds to orthogonal polynomials. We need to understand the dependence of asymptotics of this solution on the parameter $\lambda$ (or, equivalently, on the parameter $\phi$) near two critical points. The analysis is based upon the idea that if (for example) $\phi$ is close to $\omega_1$, then
\begin{equation*}
    M_n(\phi)=I+\frac{|c|}{4n\sin\omega_1}
    \left(
      \begin{array}{c}
        0 \qquad e^{i(2(\phi-\omega_1)n-\delta_1)} \\
        e^{-i(2(\phi-\omega_1)n-\delta_1)} \qquad 0 \\
      \end{array}
    \right)
    +\left(\begin{array}{c}
	\text{some}
	\\
	\text{inessential part}
	\end{array}
	\right).
\end{equation*}
As we have seen before, the terms in matrix entries of $M_n(\phi)$ of the form $\frac{e^{i\alpha n}}n$ are "dangerous" (make effect on asymptotics) only if $\alpha\in2\pi\mathbb Z$. In the case $\alpha=0$ the term of the type $\frac Xn$, where $X$ is some constant matrix, produces a resonance (change of solutions asymptotics). Now we want to eliminate all non-resonating exponential terms from the system by a certain transformation, i.e., by substitution $v_n\mapsto w_n:=T_n(\phi)v_n$, where $\{T_n(\phi)\}_{n=1}^{\infty}$ is a sequence of invertible matrices. Such a substitution leads to the discrete linear system $w_{n+1}=T_{n+1}^{-1}M_n(\phi)T_nw_n$. Transformations that we find are local, i.e., exist and can be applied only in some neighbourhoods of the critical points. It is important to control the properties of the summable remainder to ensure that it is still uniformly summable after the transformation. Let us introduce the following notation. Let $S$ be some subset of the complex plane and $R_n(\lambda)$,   $n\in\mathbb N,\lambda\in S$ be a sequence of $2\times2$ matrices depending on a parameter. We write $\{R_n(\lambda)\}_{n=1}^{\infty}\in l^1(S)$, if there exists a sequence of positive numbers $\{r_n\}_{n=1}^{\infty}\in l^1$ such that for every $\lambda\in S$ and $n\in\mathbb N$ one has $\|R_n(\lambda)\|<r_n$.

Let $U_+$ and $U_-$ be open intervals such that
\begin{equation*}
    \begin{array}{rl}
    \omega_1\in &U_+\subset(0,\pi)\backslash\{\pi-\omega_1\},
    \\
    \pi-\omega_1\in &U_-\subset(0,\pi)\backslash\{\omega_1\}.
    \end{array}
\end{equation*}
Define Harris-Lutz type transformations \cite{Harris-Lutz-1975,Benzaid-Lutz-1987} as follows:
\begin{equation}\label{T-pm}
    T^{\pm}_n(\phi):=-\sum_{k=n}^{\infty}
    \Bigl[
    V^{(1)}_k(\phi)\mp\frac{|c|}{4k\sin\omega_1}
    \left(
      \begin{array}{cc}
        0 & e^{i(2(\phi\mp\omega_1)k\mp\delta_1)} \\
        e^{-i(2(\phi\mp\omega_1)k\mp\delta_1)} & 0 \\
      \end{array}
    \right)
    \Bigr].
\end{equation}

We will later use the following (trivial) result.

    \begin{lem}\label{lem estimate of sum for Harris-Lutz}
    For every real $\xi\in\mathbb R\backslash2\pi\mathbb Z$ and $n\in\mathbb N$ one has
    $\l|\sum\limits_{k=n}^{\infty}\frac{e^{ik\xi}}k\r|\le\frac1{n\l|\sin\frac{\xi}2\r|}$.
    \end{lem}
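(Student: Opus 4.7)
The plan is to apply Abel summation (summation by parts) to peel off the monotone factor $1/k$ from the oscillating exponential. Set $A_N := \sum_{k=n}^{N} e^{ik\xi}$; as a finite geometric progression this equals $e^{in\xi}\frac{1-e^{i(N-n+1)\xi}}{1-e^{i\xi}}$, and using the identity $|1-e^{i\xi}|=2|\sin(\xi/2)|$ one obtains the uniform bound $|A_N|\le\frac{1}{|\sin(\xi/2)|}$ for all $N\ge n$.

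Next I would write, for $N\ge n$,
\begin{equation*}
\sum_{k=n}^{N}\frac{e^{ik\xi}}{k}=\frac{A_N}{N}+\sum_{k=n}^{N-1}A_k\left(\frac{1}{k}-\frac{1}{k+1}\right)=\frac{A_N}{N}+\sum_{k=n}^{N-1}\frac{A_k}{k(k+1)}.
\end{equation*}
Since $|A_N|/N\to 0$ as $N\to\infty$ by the bound above, the series converges and we pass to the limit. Taking absolute values and inserting the uniform estimate on $|A_k|$ yields
\begin{equation*}
\left|\sum_{k=n}^{\infty}\frac{e^{ik\xi}}{k}\right|\le\frac{1}{|\sin(\xi/2)|}\sum_{k=n}^{\infty}\frac{1}{k(k+1)}.
\end{equation*}
Finally, telescoping $\frac{1}{k(k+1)}=\frac{1}{k}-\frac{1}{k+1}$ evaluates the remaining sum to $\frac{1}{n}$, giving the desired inequality.

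There is no real obstacle: the only thing to be careful about is the vanishing of the boundary term $A_N/N$ at infinity, which is immediate from the uniform bound on the partial sums of the geometric series. The hypothesis $\xi\notin 2\pi\mathbb{Z}$ enters exactly to ensure $\sin(\xi/2)\neq 0$, so that $A_N$ is uniformly bounded in the first place.
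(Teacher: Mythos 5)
Your proof is correct and rests on the same mechanism as the paper's: summation by parts against the telescoping differences $\frac1k-\frac1{k+1}$, with the factor $|1-e^{i\xi}|=2\l|\sin\frac{\xi}2\r|$ supplying the denominator. The paper merely packages the same computation by multiplying the tail sum by $e^{i\xi}-1$ and reindexing, instead of introducing the geometric partial sums $A_N$ explicitly; both routes give the identical constant $\frac1{n\l|\sin\frac{\xi}2\r|}$.
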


\begin{proof}
Straightforwardly,
\begin{multline*}
    \left|(e^{i\xi}-1)\sum_{k=n}^{\infty}\frac{e^{ik\xi}}k\right|
    =\left|\sum_{k=n}^{\infty}e^{i(k+1)\xi}\l(\frac1k-\frac1{k+1}\r)-\frac{e^{in\xi}}n\right|
    \\
    \le\sum_{k=n}^{\infty}\l(\frac1k-\frac1{k+1}\r)+\frac1n=\frac2n.
\end{multline*}
Since $|e^{i\xi}-1|=2\l|\sin\frac{\xi}2\r|$, the proof is complete.
\end{proof}

Now we are able to state the properties of the transformations $T^{\pm}$.

    \begin{lem}\label{lem properties of Harris-Lutz transformation}
    Sums in \eqref{T-pm}, which define $T^{\pm}(\phi)$, converge in $U_{\pm}$ and estimates
    \begin{equation}\label{estimates of T-pm}
        T^{\pm}_n(\phi)=O\l(\frac1n\r)\as\ninf
    \end{equation}
    hold uniformly in $U_{\pm}$, respectively. Moreover,
    \begin{multline}\label{Harris-Lutz transformation T-pm}
        \exp(-T^{\pm}_{n+1}(\phi))M_n(\phi)\exp(T^{\pm}_n(\phi))
        \\
        =I\pm\frac{|c|}{4n\sin\omega_1}
        \left(%
        \begin{array}{cc}
        0 & e^{i(2(\phi\mp\omega_1)n\mp\delta_1)} \\
        e^{-i(2(\phi\mp\omega_1)n\mp\delta_1)} & 0 \\
        \end{array}%
        \right)
        +R_n^{\pm}(\phi),
    \end{multline}
    where $\{R_n^{\pm}(\phi)\}_{n=1}^{\infty}\in l^1(U_{\pm})$ and for every natural $n$ functions $R_n^{\pm}(\cdot)$ are 				continuous in $U_{\pm}$, respectrively.
    \end{lem}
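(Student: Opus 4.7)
The plan is to prove the three claims in the order stated: uniform convergence of the series defining $T^\pm_n(\phi)$ together with the $O(1/n)$ bound; the conjugation identity \eqref{Harris-Lutz transformation T-pm}; and continuity in $\phi$. I treat the ``$+$'' case only, as the ``$-$'' case is completely symmetric.

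First I would identify the oscillatory frequencies present in $V^{(1)}_k(\phi)-\tilde B^+_k(\phi)$, where $\tilde B^+_k$ stands for the resonant matrix subtracted from $V^{(1)}_k$ in \eqref{T-pm}. After the rewriting $c\sin(2\omega n+\delta)=|c|\sin(2\omega_1 n+\delta_1)$, this quantity splits into (i) a diagonal piece oscillating at frequency $\pm 2\omega_1\notin 2\pi\mathbb Z$; (ii) an off-diagonal piece oscillating at frequency $\pm 2(\phi+\omega_1)$, bounded away from $2\pi\mathbb Z$ on any compact subset of $U_+$ since $\pi-\omega_1\notin U_+$; and (iii) the near-resonant difference
\begin{equation*}
    \frac{|c|}{4(k+1)\sin\phi}[\cdots]_{k+1}-\frac{|c|}{4k\sin\omega_1}[\cdots]_k,
\end{equation*}
where $[\cdots]_k$ denotes the off-diagonal matrix with entries $e^{\pm i(2(\phi-\omega_1)k-\delta_1)}$. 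Groups (i) and (ii) yield tails of size $O(1/n)$ directly by Lemma \ref{lem estimate of sum for Harris-Lutz}, uniformly on compacta of $U_+$. For (iii) I would add and subtract to rewrite it as
\begin{equation*}
    \frac{|c|}{4\sin\omega_1}\cdot\frac{[\cdots]_{k+1}-[\cdots]_k}{k+1}
    -\frac{|c|\,[\cdots]_k}{4\sin\omega_1\,k(k+1)}
    +\frac{|c|(\sin\omega_1-\sin\phi)}{4\sin\phi\sin\omega_1}\cdot\frac{[\cdots]_{k+1}}{k+1}.
\end{equation*}
The middle summand is absolutely $O(1/k^2)$, and the remaining two oscillate with frequency $2(\phi-\omega_1)$ but carry prefactors $[\cdots]_{k+1}-[\cdots]_k=O(|\phi-\omega_1|)$ and $\sin\omega_1-\sin\phi=O(|\phi-\omega_1|)$ that vanish to first order as $\phi\to\omega_1$. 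Lemma \ref{lem estimate of sum for Harris-Lutz} gives tail bounds of size $C/(n|\sin(\phi-\omega_1)|)$ for these two pieces, and the linear vanishing of the prefactor cancels the near-singularity exactly, producing an $O(1/n)$ estimate uniform in $\phi$ on a compact neighbourhood of $\omega_1$. This proves \eqref{estimates of T-pm}.

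For the conjugation identity, the key structural input is the telescoping relation $T^+_n(\phi)-T^+_{n+1}(\phi)=\tilde B^+_n(\phi)-V^{(1)}_n(\phi)$, immediate from \eqref{T-pm}. Since $T^+_n=O(1/n)$ uniformly, $\exp(\pm T^+_n)=I\pm T^+_n+O(1/n^2)$, so expanding gives
\begin{equation*}
    \exp(-T^+_{n+1})\,M_n\,\exp(T^+_n)=M_n+M_nT^+_n-T^+_{n+1}M_n+\mathcal E_n,
\end{equation*}
where $\{\mathcal E_n\}\in l^1(U_+)$ because the cross products $V^{(1)}_nT^+_n$, $T^+_{n+1}V^{(1)}_n$, $R^{(1)}_nT^+_n$, $T^+_{n+1}R^{(1)}_n$, $(T^+_n)^2$, etc., are all bounded uniformly by summable sequences (using $V^{(1)}_n=O(1/n)$, $\|R^{(1)}_n\|\le C|q_{n+1}|$ with $\{q_n\}\in l^1$, and boundedness of $1/\sin\phi$ on compacta of $(0,\pi)$). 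The same estimates give $M_nT^+_n-T^+_{n+1}M_n=T^+_n-T^+_{n+1}+(\text{uniformly summable})$. Substituting the telescoping identity and $M_n=I+V^{(1)}_n+R^{(1)}_n$ cancels the $V^{(1)}_n$ contributions, collapsing the right-hand side to $I+\tilde B^+_n+R^+_n$ with $R^+_n:=R^{(1)}_n+(\text{uniformly summable})\in l^1(U_+)$, and $\tilde B^+_n$ is precisely the main term of \eqref{Harris-Lutz transformation T-pm}. Continuity of $R^+_n(\cdot)$ on $U_+$ is then automatic: every summand in \eqref{T-pm} is continuous in $\phi$, and Step 1 gives the local uniform convergence needed to transfer continuity to $T^+_n(\cdot)$ and hence to $R^+_n(\cdot)$.

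The main obstacle is the uniform treatment of group (iii): a naive application of Lemma \ref{lem estimate of sum for Harris-Lutz} produces a bound proportional to $1/|\sin(\phi-\omega_1)|$ that diverges as $\phi\to\omega_1\in U_+$. The estimate must instead expose the exact cancellation between the first-order vanishing of the amplitude and that of the oscillatory denominator. Once this uniform bound is in hand, the conjugation identity and the continuity statement follow by routine expansion and term-by-term analysis.
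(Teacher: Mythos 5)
Your proposal is correct and follows essentially the same route as the paper: non-resonant frequencies are handled by Lemma \ref{lem estimate of sum for Harris-Lutz}, the near-resonant difference is rearranged so that an $O(|\phi-\omega_1|)$ amplitude cancels the $1/|\sin(\phi-\omega_1)|$ from that lemma, and the conjugation identity follows from $e^Y=I+Y+O(\|Y\|^2)$ plus the telescoping of $T^+_n-T^+_{n+1}$. Your three-term splitting of group (iii) and the explicit telescoping identity are only cosmetic variants of the paper's factoring out the $k$-independent matrix that is $O(\phi-\omega_1)$ and its ``open the brackets and keep the $1/n$ terms'' step.
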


\begin{proof}
Let us prove the statement for $T^+$ (one can obtain the proof of the second statement by changing the notation). Write
\begin{equation*}
    T^+_n(\phi)=\sum_{k=n}^{\infty}t^+_k(\phi),
\end{equation*}
where
\begin{multline*}
    t^+_k(\phi):=
    -\frac{|c|\sin(2\omega_1(k+1)+\delta_1)}{2i(k+1)\sin\phi}
    \left(%
    \begin{array}{cc}
    1 & 0 \\
    0 & -1 \\
    \end{array}%
    \right)
    \\
    -
    \frac{|c|}{4(k+1)\sin\phi}
    \left(%
    \begin{array}{cc}
    0 & e^{i(2(\phi-\omega_1)(k+1)-\delta_1)} \\
    e^{-i(2(\phi-\omega_1)(k+1)-\delta_1)} & 0 \\
    \end{array}%
    \right)
    \\
    +
    \frac{|c|}{4(k+1)\sin\phi}
    \left(%
    \begin{array}{cc}
    0 & e^{i(2(\phi+\omega_1)(k+1)+\delta_1)} \\
    e^{-i(2(\phi+\omega_1)(k+1)+\delta_1)} & 0 \\
    \end{array}%
    \right)
    \\
    +
    \frac{|c|}{4k\sin\omega_1}
    \left(%
    \begin{array}{cc}
    0 & e^{i(2(\phi-\omega_1)k-\delta_1)} \\
    e^{-i(2(\phi-\omega_1)k-\delta_1)} & 0 \\
    \end{array}%
    \right).
\end{multline*}
Since the values $\xi=\pm2\omega_1,\pm2(\phi+\omega_1)$ do not belong to $\mathbb R\backslash2\pi\mathbb Z$ for $\phi\in U_+$, the sum over $k$ of first and third terms can be uniformly estimated using Lemma \ref{lem estimate of sum for Harris-Lutz}. The difference between the second term and the same expression with $k$ instead of $k+1$ in the denominator is uniformly $O(1/k^2)$, therefore the difficulty can only arise near the point $\phi=\omega_1$ when one takes the sum over $k$ of the following terms:
\begin{multline*}
    \left[
    \frac1{\sin\omega_1}I
    -\frac1{\sin\phi}
    \left(
      \begin{array}{cc}
        e^{2i(\phi-\omega_1)} & 0 \\
        0 & e^{-2i(\phi-\omega_1)} \\
      \end{array}
    \right)
    \right]
	\\
	\times
    \frac{|c|}{4k}
    \left(
      \begin{array}{cc}
        0 & e^{i(2(\phi-\omega_1)k-\delta_1)} \\
        e^{-i(2(\phi-\omega_1)k-\delta_1)} & 0 \\
      \end{array}
    \right).
\end{multline*}
Expression in the square brackets does not depend on $k$ and is $O(\phi-\omega_1)$ as $\phi\rightarrow\omega_1$, which cancels the zero of $\xi$ when we apply Lemma \ref{lem estimate of sum for Harris-Lutz}. This gives a uniform in $U_+$ estimate $T^+_n(\phi)=O(1/n)$ as $n\rightarrow\infty$. To obtain the equality \eqref{Harris-Lutz transformation T-pm} we use the estimate $e^Y=I+Y+O(\|Y\|^2)$ as $\|Y\|\rightarrow0$ together with \eqref{estimates of T-pm}: substitute $M_n$ in the form \eqref{M},\eqref{V-1} and \eqref{R-1} and $T^+_n$ in the form \eqref{T-pm} into the expression
$(I-T^+_{n+1}+O(1/n^2))M_n(I+T^+_n+O(1/n^2))$,
open the brackets, simplify the result and leave only the terms of the order $1/n$ (the smaller terms should be included into the remainder $R^+_n$). The remainder is uniformly summable and continuous in $U_+$, which follows immediately.
\end{proof}

Transform the system further using the similarity relation
\begin{equation*}
    \left(
       \begin{array}{cc}
         0 & e^{i\alpha} \\
         e^{-i\alpha} & 0 \\
       \end{array}
     \right)
     =
     \left(
       \begin{array}{cc}
         1 & i \\
         1 & -i \\
       \end{array}
     \right)
     \left(
       \begin{array}{cc}
         \cos\alpha & \sin\alpha \\
         \sin\alpha & -\cos\alpha \\
       \end{array}
     \right)
     \left(
       \begin{array}{cc}
         1 & i \\
         1 & -i \\
       \end{array}
     \right)^{-1}.
\end{equation*}
It is easy to check that by the transformation
\begin{equation*}
    \hat v^+_n:=
    \left(%
    \begin{array}{cc}
    1 & i \\
    1 & -i \\
    \end{array}%
    \right)^{-1}
    \left(%
    \begin{array}{cc}
    e^{i\delta_1/2} & 0 \\
    0 & e^{-i\delta_1/2} \\
    \end{array}%
    \right)
    \exp(-T^+_n(\phi))v_n
\end{equation*}
the system $v_{n+1}=M_n(\phi)v_n$ is reduced to the following one:
\begin{equation}\label{system for v-hat+}
    \hat v^+_{n+1}=
    \left[
    I+\frac{|c|}{4n\sin\omega_1}
    \left(%
    \begin{array}{cc}
    \cos(2(\phi-\omega_1)n) & \sin(2(\phi-\omega_1)n) \\
    \sin(2(\phi-\omega_1)n) & -\cos(2(\phi-\omega_1)n) \\
    \end{array}%
    \right)
    +\hat R_n^+(\phi)
    \right]
    \hat v^+_n,
\end{equation}
where $\{\hat R_n^+(\phi)\}_{n=1}^{\infty}\in l^1(U_+)$ and the function $\hat R^+_n(\cdot)$ is continuous in $U_+$ for every $n$. System \eqref{system for v-hat+} is equivalent for $\phi\in U_+$ to the eigenfunction equation \eqref{spectral equation} for the operator $\mathcal J$. Define the solution $\hat p^+(\phi)$ of \eqref{system for v-hat+} which corresponds to orthogonal polynomials:
\begin{multline}\label{p-hat+}
    \hat p^+_n(\phi):=
    \left(%
    \begin{array}{cc}
    1 & i \\
    1 & -i \\
    \end{array}%
    \right)^{-1}
    \left(%
    \begin{array}{cc}
    e^{i\delta_1/2} & 0 \\
    0 & e^{-i\delta_1/2} \\
    \end{array}%
    \right)
    \exp(-T^+_n(\phi))
    \\
    \times
    \left(%
    \begin{array}{cc}
    e^{-i\phi n} & e^{i\phi n} \\
    e^{-i\phi(n+1)} & e^{i\phi(n+1)} \\
    \end{array}%
    \right)^{-1}
    \left(
      \begin{array}{c}
        P_n(2\cos\phi) \\
        P_{n+1}(2\cos\phi) \\
      \end{array}
    \right).
\end{multline}
Now we are able to restate Proposition \ref{prop Janas-Simonov} in a form which is more convenient for our needs. The objects $\hat v^-_n,\hat R_n^-(\phi),\hat p^-_n(\phi)$ are defined in the same fashion for $\phi\in U_-$.

    \begin{lem}\label{lem Weyl-Titchmarsh formula}
    For every $\phi\in U_+$ the sequence $\{\hat p^+_n(\phi)\}_{n=1}^{\infty}$ given by \eqref{p-hat+} is a solution of the system 			\eqref{system for v-hat+}. For every $\phi\in U_+\backslash\{\omega_1\}$ it has a non-zero limit
    \begin{equation*}
    \lim_{\ninf}\hat p^+_n(\phi)=:\hat p^+_{\infty}(\phi).
    \end{equation*}
    The spectral density of $\mathcal J$ can be expressed in terms of this limit as
    \begin{equation}\label{rho' in U_+}
        \rho'(2\cos\phi)=\frac1{4\pi\sin\phi\l\|\hat p^+_{\infty}(\phi)\r\|^2},\ \phi\in U_+.
    \end{equation}
    Analogous statement holds true, if one replaces $\hat p^+$ by $\hat p^-$, $\hat R^+$ by $\hat R^-$, $U_+$ by $U_-$ and $\omega_1$ 	by $\pi-\omega_1$.
    \end{lem}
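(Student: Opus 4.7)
The plan is to read each of the three claims off the construction of $\hat p^+_n$ combined with Proposition \ref{prop Janas-Simonov}.

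That $\hat p^+_n(\phi)$ solves \eqref{system for v-hat+} is immediate from the chain of reductions in Section \ref{section reduction}. The vector $(P_n, P_{n+1})^T$ satisfies the vector form \eqref{spectral equation, vector form} of the eigenfunction equation with $\lambda = 2\cos\phi$; the variation-of-parameters substitution converts it into a solution of \eqref{system for v}, and the further composition of $\exp(-T^+_n(\phi))$ with the constant similarity matrix and diagonal phase matrix is, by construction of $T^+$ in Lemma \ref{lem properties of Harris-Lutz transformation}, precisely what reduces \eqref{system for v} to \eqref{system for v-hat+}. Since \eqref{p-hat+} is this exact composition applied to $(P_n,P_{n+1})^T$, the claim follows.

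For existence and nonvanishing of the limit I would rewrite the asymptotics of Proposition \ref{prop Janas-Simonov} in vector form as
\begin{equation*}
\binom{P_n(2\cos\phi)}{P_{n+1}(2\cos\phi)} = \frac{zF(z)}{1-z^2}\binom{z^{-n}}{z^{-(n+1)}} + \frac{z\overline{F(z)}}{z^2-1}\binom{z^n}{z^{n+1}} + o(1),
\end{equation*}
with $z = e^{i\phi}$. The plane-wave matrix in \eqref{p-hat+} has determinant $2i\sin\phi \neq 0$, so its inverse is bounded and annihilates the $z^{\pm n}$ components of the asymptotic decomposition, sending $(P_n,P_{n+1})^T$ to the constant vector $\left(\frac{zF(z)}{1-z^2},\frac{z\overline{F(z)}}{z^2-1}\right)^T$ up to $o(1)$. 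Since $\exp(-T^+_n(\phi)) = I + O(1/n)$ by Lemma \ref{lem properties of Harris-Lutz transformation}, multiplication by the remaining constant matrices yields
\begin{equation*}
\hat p^+_\infty(\phi) = \left(\begin{array}{cc} 1 & i \\ 1 & -i \end{array}\right)^{-1}\left(\begin{array}{cc} e^{i\delta_1/2} & 0 \\ 0 & e^{-i\delta_1/2}\end{array}\right)\binom{\frac{zF(z)}{1-z^2}}{\frac{z\overline{F(z)}}{z^2-1}}.
\end{equation*}

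To obtain \eqref{rho' in U_+} I would compute $\|\hat p^+_\infty(\phi)\|^2$. The matrix $A = \left(\begin{array}{cc} 1 & i \\ 1 & -i\end{array}\right)$ satisfies $AA^* = 2I$, so $A^{-1}$ multiplies squared norms by $\tfrac12$; the diagonal phase factor is unitary. The two components of the limiting vector have common modulus $|F(z)|/|1-z^2|$, hence
\begin{equation*}
\|\hat p^+_\infty(\phi)\|^2 = \frac{1}{2}\cdot\frac{2|F(z)|^2}{|1-z^2|^2} = \frac{|F(z)|^2}{4\sin^2\phi},
\end{equation*}
which is non-zero on $U_+\setminus\{\omega_1\}$ since $F$ does not vanish at $e^{i\phi}$ for $\phi \in (0,\pi)\setminus\{\omega_1,\pi-\omega_1\}$. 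Substituting into \eqref{Weyl-Titchmarsh type formula} with $\sqrt{4-\lambda^2} = 2\sin\phi$ produces \eqref{rho' in U_+}; the argument for $\hat p^-$ is identical up to notational changes.

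The only real obstacle is careful bookkeeping: checking that the factor $\tfrac12$ coming from $A^{-1}$, the identity $|1-z^2|^2 = 4\sin^2\phi$, and the factor $2\sin\phi$ from $\sqrt{4-\lambda^2}$ combine to give exactly the coefficient $4\pi\sin\phi$ in the denominator of \eqref{rho' in U_+}. No analytic input beyond Proposition \ref{prop Janas-Simonov} and Lemma \ref{lem properties of Harris-Lutz transformation} is needed.
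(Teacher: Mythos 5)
Your proposal is correct and follows essentially the same route as the paper: read the limit off Proposition \ref{prop Janas-Simonov} after inverting the plane-wave matrix, use $T^+_n(\phi)=o(1)$, and compute the norm to match \eqref{Weyl-Titchmarsh type formula} with \eqref{rho' in U_+}. Your limiting vector $\bigl(\tfrac{zF(z)}{1-z^2},\tfrac{z\overline{F(z)}}{z^2-1}\bigr)^T$ coincides with the paper's $\tfrac1{2\sin\phi}\bigl(iF(e^{i\phi}),\overline{iF(e^{i\phi})}\bigr)^T$ since $1-z^2=-2iz\sin\phi$, and the bookkeeping of the factors $\tfrac12$, $|1-z^2|^2=4\sin^2\phi$ and $\sqrt{4-\lambda^2}=2\sin\phi$ checks out.
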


\begin{proof}
The assertion of Proposition \ref{prop Janas-Simonov} for $\phi\in U_+\backslash\{\omega_1\}$ can be rewritten as
\begin{equation*}
        \left(%
    \begin{array}{cc}
    e^{-i\phi n} & e^{i\phi n} \\
    e^{-i\phi(n+1)} & e^{i\phi(n+1)} \\
    \end{array}%
    \right)^{-1}
    \left(
      \begin{array}{c}
        P_n(2\cos\phi) \\
        P_{n+1}(2\cos\phi) \\
      \end{array}
    \right)
    \rightarrow
    \frac1{2\sin\phi}
    \left(
      \begin{array}{c}
        iF(e^{i\phi}) \\
        \overline{iF(e^{i\phi})} \\
      \end{array}
    \right)
\end{equation*}
as $n\rightarrow\infty$. Together with the fact that $T^+_n(\phi)=o(1)$ this yields:
\begin{equation*}
    \hat p^+_{\infty}(\phi)=
    \left(%
    \begin{array}{cc}
    1 & i \\
    1 & -i \\
    \end{array}%
    \right)^{-1}
    \left(%
    \begin{array}{cc}
    e^{i\delta_1/2} & 0 \\
    0 & e^{-i\delta_1/2} \\
    \end{array}%
    \right)
    \frac1{2\sin\phi}
    \left(
      \begin{array}{c}
        iF(e^{i\phi}) \\
        \overline{iF(e^{i\phi})} \\
      \end{array}
    \right)
\end{equation*}
An explicit calculation shows that
\begin{equation*}
    \left\|\hat p^+_{\infty}(\phi)\right\|=\frac{|F(e^{i\phi})|}{2\sin\phi}.
\end{equation*}
By Proposition \ref{prop Janas-Simonov} again,
\begin{equation*}
    \rho'(2\cos\phi)=\frac{\sin\phi}{\pi|F(e^{i\phi})|^2}=\frac1{4\pi\sin\phi\l\|\hat p^+_{\infty}(\phi)\r\|^2},
\end{equation*}
which completes the proof for $\phi\in U_+$. In the second case the proof is analogous.
\end{proof}

\section{Results for the model problem}\label{section Naboko-Simonov}

In this section we formulate results concerning the model system
\begin{equation}\label{model system}
    x_{n+1}=
    \left[
    I+\frac{\beta}n\left(%
    \begin{array}{cc}
    \cos(\varepsilon n) & \sin(\varepsilon n) \\
    \sin(\varepsilon n) & -\cos(\varepsilon n) \\
    \end{array}%
    \right)
    +R_n(\varepsilon)
    \right]
    x_n,\ n\in\mathbb N,\ \varepsilon\in U,
\end{equation}
which were obtained in \cite{Naboko-Simonov-2011}. Using these results we immediately get information about the behavior of the functions $\hat p^+_{\infty}(\phi)$ and $\hat p^-_{\infty}(\phi)$ near the points $\omega_1$ and $\pi-\omega_1$, respectively, and therefore about the behavior of the spectral density of $\mathcal J$ near the critical points, cf. \eqref{rho' in U_+}. Here $\beta$ is positive, $\varepsilon\in U$ is a small parameter, $U$ is an interval such that \begin{equation*}
    0\in U\subset (-2\pi;2\pi)
\end{equation*}
and the matrices $R_n(\varepsilon)$ are supposed to be uniformly summable in $n$ with respect to $\varepsilon\in U$ and continuous in $U$ for every $n$.

Let us write the system \eqref{model system} as
\begin{equation*}
    x_{n+1}=B_n(\varepsilon)x_n
\end{equation*}
with
\begin{equation}\label{B-n}
    B_n(\varepsilon):
    =
    I+\frac{\beta}n
    \left(%
    \begin{array}{cc}
    \cos(\varepsilon n) & \sin(\varepsilon n) \\
    \sin(\varepsilon n) & -\cos(\varepsilon n) \\
    \end{array}%
    \right)
    +R_n(\varepsilon).
\end{equation}
We parametrize different solutions by their initial conditions $f\in\mathbb C^2$ (while the system itself depends on the small parameter $\varepsilon\in U$):
\begin{equation}\label{u(f)}
    \begin{array}{rl}
    x_1(\varepsilon,f)&:=f,
    \\
    x_{n+1}(\varepsilon,f)
    &:=
    B_n(\varepsilon)x_n(\varepsilon,f),\,n\ge1.
    \end{array}
\end{equation}

    \begin{prop}[Naboko-Simonov]\label{prop Naboko-Simonov}
    Assume that functions $R_n(\cdot)$ are continuous in $U$ for every $n\in\mathbb N$, the matrices $B_n(\varepsilon)$ are invertible for every $n\in\mathbb N$, $\varepsilon\in U$ and the sequence $\{R_n(\varepsilon)\}_{n=1}^{\infty}\in l^1(U)$.
    Then for every $f\in\mathbb C^2$ and every $\varepsilon\in U\backslash\{0\}$ the limit
    \begin{equation*}
        \lim_{n\rightarrow\infty}x_n(\varepsilon,f)
    \end{equation*}
    exist. For $\varepsilon=0$ the limit
    \begin{equation*}
        \lim\limits_{n\rightarrow\infty}\frac{x_n(0,f)}{n^{\beta}}
    \end{equation*}
    exists for every $f$ and the linear map
    \begin{equation*}
        f\mapsto \lim\limits_{n\rightarrow\infty}\frac{x_n(0,f)}{n^{\beta}}
    \end{equation*}
    has rank one. If, moreover, $f$ is such that $\lim\limits_{n\rightarrow\infty}\frac{x_n(0,f)}{n^{\beta}}\neq0$, then there exist two one-side limits
    \begin{equation*}
        \lim\limits_{\varepsilon\rightarrow\pm0}|\varepsilon|^{\beta}\lim\limits_{n\rightarrow\infty}x_n(\varepsilon,f)\neq0.
    \end{equation*}
    \end{prop}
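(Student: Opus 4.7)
The plan splits into the three regimes of the statement. For fixed $\varepsilon \in U\setminus\{0\}$, I would apply a Harris-Lutz transformation analogous to the one used in Section \ref{section reduction}. Setting
\begin{equation*}
S_n(\varepsilon) := -\sum_{k=n}^{\infty}\frac{\beta}{k}\left(
\begin{array}{cc}
\cos(\varepsilon k) & \sin(\varepsilon k) \\
\sin(\varepsilon k) & -\cos(\varepsilon k)
\end{array}
\right),
\end{equation*}
Lemma \ref{lem estimate of sum for Harris-Lutz} (applied to each matrix entry after expanding $\cos$ and $\sin$ into exponentials) yields convergence of the series and the bound $\|S_n(\varepsilon)\| = O(1/(n|\sin(\varepsilon/2)|))$. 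The substitution $y_n := \exp(-S_n(\varepsilon)) x_n$ should reduce the system to $y_{n+1} = (I + \tilde R_n(\varepsilon)) y_n$ with $\{\tilde R_n(\varepsilon)\}_{n=1}^\infty \in l^1$ for each fixed $\varepsilon \neq 0$; existence of $\lim_n y_n$, and hence of $\lim_n x_n(\varepsilon, f)$ since $S_n(\varepsilon) \to 0$, then follows by a standard telescoping argument on the transformed equation.

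For $\varepsilon = 0$ the oscillating matrix reduces to $\mathrm{diag}(1,-1)$, so the system becomes an $l^1$-perturbation of a diagonal one with entries $1 \pm \beta/n$. I would invoke the discrete Levinson theorem (cf.~\cite{Benzaid-Lutz-1987}) to produce a basis of solutions with $x^{(1)}_n = n^\beta(e_1 + o(1))$ and $x^{(2)}_n = n^{-\beta}(e_2 + o(1))$. Decomposing an arbitrary initial condition as $f = a_1(f) x^{(1)}_1 + a_2(f) x^{(2)}_1$ yields $\lim_n x_n(0,f)/n^\beta = a_1(f)\, e_1$, which is linear in $f$ and evidently of rank one.

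The main obstacle is the third assertion, and my plan is a matched-asymptotic argument that splits the propagation at $N(\varepsilon) := \lfloor 1/|\varepsilon|\rfloor$. On the inner range $1 \le n \le N(\varepsilon)$ the phase $\varepsilon n$ stays bounded, so the evolution is a genuine perturbation of the $\varepsilon = 0$ system and the dominant $n^\beta$ mode is preserved; for $f$ with $a_1(f) \ne 0$ one should obtain $x_{N(\varepsilon)}(\varepsilon,f) = a_1(f)\, N(\varepsilon)^\beta (\xi_\pm(f) + o_\varepsilon(1))$ for an explicit nonzero direction $\xi_\pm$ encoding the boundary-layer rotation of eigenvectors. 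On the outer range $n \ge N(\varepsilon)$ the Harris-Lutz bound $\|S_n(\varepsilon)\| = O(1/(n|\varepsilon|))$ is of order $1$ at the boundary and summable beyond it, so the residual evolution contributes a bounded nonsingular factor whose limit as $\varepsilon \to 0^\pm$ exists. Combining the two regimes gives $\lim_n x_n(\varepsilon, f) = c_\pm(f)\, |\varepsilon|^{-\beta}(1 + o(1))$, which is the desired conclusion once $c_\pm(f) \ne 0$ is verified.

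The hardest step is precisely this matching and the nondegeneracy $c_\pm(f) \ne 0$. Rescaling $t = \varepsilon n$ suggests a limiting continuous-time model $\dot y(t) = (\beta/t) A(t) y(t)$ governing the boundary layer, and the nonvanishing of $c_\pm(f)$ reduces to showing that the asymptotic inner direction $\xi_\pm$ is not annihilated by the outer Harris-Lutz propagator. This nondegeneracy uses the specific structure of the oscillating matrix (in particular that its eigenvectors rotate at rate exactly $\varepsilon/2$, so that the rescaled model has a genuine non-trivial limit) and is the substantive content of the argument in \cite{Naboko-Simonov-2011}.
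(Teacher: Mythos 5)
The paper does not prove this proposition; it is imported verbatim from \cite{Naboko-Simonov-2011}, so there is no in-paper argument to compare against. Judged on its own terms, your treatment of the first two assertions is correct and standard: the Harris--Lutz substitution with $S_n(\varepsilon)$ does cancel the oscillating $\beta/n$ term to first order (since $S_n-S_{n+1}$ equals minus that term), Lemma \ref{lem estimate of sum for Harris-Lutz} gives the stated bound, and for $\varepsilon=0$ the discrete Levinson theorem of \cite{Benzaid-Lutz-1987} yields the $n^{\pm\beta}$ basis and hence the rank-one limit map.

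The gap is in the third assertion, which is the entire nontrivial content of the proposition. What must be shown is that the one-sided limits $\lim_{\varepsilon\to\pm0}|\varepsilon|^{\beta}\lim_{n}x_n(\varepsilon,f)$ \emph{exist} and are nonzero, not merely that $\lim_n x_n(\varepsilon,f)$ has order $|\varepsilon|^{-\beta}$. Your matched-asymptotics scheme does not reach this: on the inner range the phase $\varepsilon n$ is of order one by the time $n\sim N(\varepsilon)$, so the evolution there is \emph{not} a perturbation of the $\varepsilon=0$ system and the claimed form $a_1(f)N(\varepsilon)^{\beta}(\xi_\pm+o_\varepsilon(1))$ is exactly what needs proof; on the outer range your own bound gives $\|S_{N(\varepsilon)}(\varepsilon)\|=O(1)$, so the Harris--Lutz generator is not small at the matching point and the "bounded nonsingular factor whose limit exists" is asserted, not derived. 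You acknowledge this and defer the nondegeneracy $c_\pm(f)\neq0$ and the convergence of both factors to "the substantive content of the argument in \cite{Naboko-Simonov-2011}" --- but that is precisely the statement being proved, so the proposal is a plausible plan rather than a proof. To close it one needs uniform-in-$\varepsilon$ control of the propagator across the transition region $n\sim1/|\varepsilon|$ (for instance via a rescaled limiting system in the variable $t=\varepsilon n$ together with uniform error estimates), and that analysis is absent.
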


This result can be reformulated in terms of infinite matrix products, which will be more useful for us here. Let $R$ stand for the whole sequence $\{R_n(\varepsilon)\}_{n=1}^{\infty}$.

    \begin{prop}\label{prop matrix products}
    In assumptions of Proposition \ref{prop Naboko-Simonov}, the following holds. For every $\varepsilon\in U\backslash\{0\}$ there exists
    \begin{equation*}
        \Phi(\beta,\varepsilon,R):=\prod_{n=1}^{\infty}B_n(\varepsilon).
    \end{equation*}
    For $\varepsilon=0$ there exists the limit
    \begin{equation*}
        \Phi_0(\beta,R):=\lim_{N\rightarrow\infty}\frac1{N^{\beta}}\prod_{n=1}^NB_n(0),
    \end{equation*}
    which is a matrix of rank one. And finally, there exist two one-side limits
    \begin{equation*}
        \Phi_{\pm}(\beta,R):=
        \lim_{\varepsilon\rightarrow\pm0}|\varepsilon|^{\beta}\Phi(\beta,\varepsilon,R)
    \end{equation*}
    such that
    \begin{equation*}
        \text{Ker }\Phi_0(\beta,R)=\text{Ker }\Phi_-(\beta,R)=\text{Ker }\Phi_+(\beta,R).
    \end{equation*}
    \end{prop}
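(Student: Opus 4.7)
The plan is to translate Proposition \ref{prop Naboko-Simonov}, which is stated in terms of the solutions $x_n(\varepsilon,f)$ with initial datum $f\in\mathbb C^2$, into assertions about the matrix product $X_N(\varepsilon):=B_N(\varepsilon)\cdots B_1(\varepsilon)$ by evaluating it on the standard basis of $\mathbb C^2$. Since $X_N(\varepsilon)f=x_{N+1}(\varepsilon,f)$, the existence of $\lim_n x_n(\varepsilon,f)$ for every $f$ and every $\varepsilon\in U\setminus\{0\}$ yields, when applied to $f=e_1,e_2$, the convergence of both columns of $X_N(\varepsilon)$ and hence the existence of $\Phi(\beta,\varepsilon,R)$. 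Likewise, convergence of $x_n(0,f)/n^\beta$ for every $f$, together with $(N-1)^\beta/N^\beta\to 1$, gives convergence of $X_N(0)/N^\beta$ to a matrix $\Phi_0(\beta,R)$; this matrix induces precisely the linear map $f\mapsto\lim_n x_n(0,f)/n^\beta$, whose rank is one by Proposition \ref{prop Naboko-Simonov}.

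For the existence of $\Phi_\pm(\beta,R)$ I would proceed by linearity. Proposition \ref{prop Naboko-Simonov} provides $\lim_{\varepsilon\to\pm 0}|\varepsilon|^\beta\Phi(\beta,\varepsilon,R)f$ only for $f\notin\text{Ker}\,\Phi_0(\beta,R)$; however, because $\text{Ker}\,\Phi_0$ is one-dimensional, one may pick any $f_1\notin\text{Ker}\,\Phi_0$ and any nonzero $f_0\in\text{Ker}\,\Phi_0$, and then $\{f_1,f_1+f_0\}$ is a basis of $\mathbb C^2$ consisting entirely of vectors outside $\text{Ker}\,\Phi_0$, along each of which the one-sided limit exists. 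Extending by linearity defines the matrix $\Phi_\pm(\beta,R)$ on all of $\mathbb C^2$.

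It remains to establish the kernel equalities. The inclusion $\text{Ker}\,\Phi_\pm\subseteq\text{Ker}\,\Phi_0$ is immediate from the nonvanishing clause of Proposition \ref{prop Naboko-Simonov}. For the reverse inclusion, since $\text{Ker}\,\Phi_0$ is one-dimensional it suffices to show that $\Phi_\pm$ is singular. For this I compute $\det B_n(\varepsilon)$: since the rotation-like matrix in \eqref{B-n} has trace zero and determinant $-1$, a routine expansion yields
\begin{equation*}
\det B_n(\varepsilon)=1-\frac{\beta^2}{n^2}+\text{tr}\,R_n(\varepsilon)+O\l(\frac{\|R_n(\varepsilon)\|}n\r)+O(\|R_n(\varepsilon)\|^2),
\end{equation*}
so the uniform summability of $\{R_n\}$ implies that $\det\Phi(\beta,\varepsilon,R)=\prod_{n=1}^{\infty}\det B_n(\varepsilon)$ converges and is uniformly bounded on $U\setminus\{0\}$. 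Since $|\varepsilon|^{2\beta}\to 0$ as $\varepsilon\to\pm 0$ (recall $\beta>0$), the convergence $|\varepsilon|^{2\beta}\det\Phi(\beta,\varepsilon,R)\to\det\Phi_\pm(\beta,R)$ forces $\det\Phi_\pm(\beta,R)=0$; thus $\Phi_\pm$ is singular and its kernel must coincide with the one-dimensional $\text{Ker}\,\Phi_0$.

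The main subtlety I anticipate is exactly this last step: Proposition \ref{prop Naboko-Simonov} is silent about the one-sided limit at a vector $f\in\text{Ker}\,\Phi_0$, and the determinant identity is what actually forces this limit to vanish rather than merely be finite.
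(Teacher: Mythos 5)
Your proof is correct, and it is worth noting that the paper itself offers no proof of Proposition \ref{prop matrix products}: it is presented as a mere ``reformulation'' of Proposition \ref{prop Naboko-Simonov}, with the substantive work deferred to the cited reference. Your argument therefore supplies exactly the two points that this reformulation glosses over. First, Proposition \ref{prop Naboko-Simonov} asserts the one-sided limits only for initial data $f$ with $\lim_n x_n(0,f)/n^{\beta}\neq 0$; your device of taking the basis $\{f_1,\,f_1+f_0\}$, both members of which lie outside the one-dimensional $\text{Ker}\,\Phi_0$, and extending by linearity is the right way to obtain $\Phi_{\pm}$ as a matrix limit on all of $\mathbb C^2$. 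Second, and more importantly, the inclusion $\text{Ker}\,\Phi_0\subseteq\text{Ker}\,\Phi_{\pm}$ genuinely does not follow from the literal statement of Proposition \ref{prop Naboko-Simonov}, and your determinant (discrete Liouville/Wronskian) argument closes this gap cleanly: $\det B_n(\varepsilon)=1-\beta^2/n^2+O(\|R_n(\varepsilon)\|)$ is uniformly close to $1$ in a summable way, so $\det\Phi(\beta,\varepsilon,R)$ stays bounded on $U\setminus\{0\}$, whence $\det\Phi_{\pm}=\lim_{\varepsilon\to\pm0}|\varepsilon|^{2\beta}\det\Phi(\beta,\varepsilon,R)=0$ because $\beta>0$; combined with $\text{Ker}\,\Phi_{\pm}\subseteq\text{Ker}\,\Phi_0$ and $\dim\text{Ker}\,\Phi_0=1$ this forces equality. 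This is very likely the mechanism used in the cited work, but since the present paper never displays it, your write-up is a legitimate and complete derivation from the stated hypotheses rather than a paraphrase.
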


\section{Zeroes of the spectral density}\label{section final result}

In this section we put together all the ingredients: the Weyl-Titchmarsh type formula from \cite{Janas-Simonov-2010}, the analysis of \cite{Naboko-Simonov-2011} and the transformations of Section \ref{section reduction}, to obtain the main result of the present paper.

    \begin{thm}\label{thm main result}
    Let $\mathcal J$ be the discrete Schr\"odinger operator with the potential
    \begin{equation*}
    \frac{c\sin(2\omega n+\delta)}n+q_n,
    \end{equation*}
    where $c,\omega,\delta$ are real constants, $\{q_n\}_{n=1}^{\infty}$ is a real-valued sequence such that
    \begin{equation*}
    c\neq0,\omega\notin\frac{\pi\mathbb Z}2\text{ and }\{q_n\}_{n=1}^{\infty}\in l^1.
    \end{equation*}
    Let $\nu_{cr}\in\{-2\cos\omega,2\cos\omega\}$. If $\nu_{cr}$ is neither an eigenvalue nor a half-bound state of $\mathcal J$, then there exist two one-side    limits
    \begin{equation*}
        \lim_{\lambda\rightarrow\nu_{cr}\pm0}\frac{\rho'(\lambda)}
        {|\lambda-\nu_{cr}|^{\frac{|c|}{2|\sin\omega|}}},
    \end{equation*}
    where $\rho'$ is the spectral density of $\mathcal J$.
    \end{thm}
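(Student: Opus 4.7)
The plan is to reduce the system \eqref{system for v-hat+} to the model system \eqref{model system} and invoke Proposition \ref{prop matrix products}, then use the Weyl-Titchmarsh formula \eqref{rho' in U_+}. I treat $\nu_{cr}=2\cos\omega=2\cos\omega_1$; the case $\nu_{cr}=-2\cos\omega=2\cos(\pi-\omega_1)$ is entirely analogous, with $U_-$, $\hat p^-$, and $\pi-\omega_1$ replacing $U_+$, $\hat p^+$, $\omega_1$. The match with \eqref{model system} is direct upon setting
\begin{equation*}
\beta:=\frac{|c|}{4\sin\omega_1}=\frac{|c|}{4|\sin\omega|},\qquad \varepsilon:=2(\phi-\omega_1),\qquad R_n(\varepsilon):=\hat R_n^+(\omega_1+\varepsilon/2),
\end{equation*}
and the hypotheses of Proposition \ref{prop matrix products} on $R_n(\cdot)$ are guaranteed by Lemma \ref{lem properties of Harris-Lutz transformation}. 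The initial vector $\hat p^+_1(\phi)$ defined by \eqref{p-hat+} is continuous in $\phi$ at $\omega_1$.

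Iterating the recursion gives $\hat p^+_\infty(\phi)=\Phi(\beta,\varepsilon(\phi),R)\,\hat p^+_1(\phi)$. Combining the operator-norm convergence $|\varepsilon|^\beta\Phi(\beta,\varepsilon,R)\to\Phi_\pm$ from Proposition \ref{prop matrix products} with the continuity of $\hat p^+_1$, I obtain
\begin{equation*}
|\phi-\omega_1|^\beta\,\hat p^+_\infty(\phi)\longrightarrow 2^{-\beta}\,\Phi_\pm\hat p^+_1(\omega_1)\quad\text{as }\phi\to\omega_1\pm 0.
\end{equation*}
Substituting into \eqref{rho' in U_+} yields, provided $\Phi_\pm\hat p^+_1(\omega_1)\neq 0$,
\begin{equation*}
|\phi-\omega_1|^{2\beta}\rho'(2\cos\phi)\longrightarrow\frac{4^\beta}{4\pi\sin\omega_1\,\|\Phi_\pm\hat p^+_1(\omega_1)\|^2}\neq 0.
\end{equation*}
The change of variables $\lambda=2\cos\phi$ is then routine: $|\lambda-\nu_{cr}|=2\sin\omega_1\,|\phi-\omega_1|+O((\phi-\omega_1)^2)$ converts $|\phi-\omega_1|^{2\beta}$ into $|\lambda-\nu_{cr}|^{2\beta}$ up to a nonzero constant, the sign relation $\lambda-\nu_{cr}\sim-2\sin\omega_1\,(\phi-\omega_1)$ matches each one-sided limit in $\lambda$ with one of the two limits above, and the resulting exponent is $2\beta=|c|/(2|\sin\omega|)$ as required.

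The main obstacle, and where the hypothesis enters, is verifying that $\Phi_\pm\hat p^+_1(\omega_1)\neq 0$ under the assumption that $\nu_{cr}$ is neither an eigenvalue nor a half-bound state. By the common-kernel assertion in Proposition \ref{prop matrix products}, this is equivalent to $\hat p^+_1(\omega_1)\notin\ker\Phi_0$, i.e., to $\|\hat p^+_n(\omega_1)\|/n^\beta\not\to 0$. Since all factors in \eqref{p-hat+} (the oscillatory inverse, the diagonal $e^{\pm i\delta_1/2}$ factor, the constant similarity matrix, and $\exp(-T^+_n)=I+O(1/n)$ by \eqref{estimates of T-pm}) are uniformly bounded with uniformly bounded inverses for large $n$, the sequence $\|\hat p^+_n(\omega_1)\|$ is comparable to $|P_n(\nu_{cr})|+|P_{n+1}(\nu_{cr})|$. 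Therefore $\hat p^+_1(\omega_1)\in\ker\Phi_0$ means precisely that the orthogonal polynomials at $\nu_{cr}$ have the subordinate asymptotics of order $n^{-\beta}$ (cf.\ Theorem \ref{thm asymptotics of GEV}) instead of the generic dominant order $n^\beta$. But $P_n(\nu_{cr})$ being subordinate is exactly the condition that $\nu_{cr}$ is an eigenvalue (when $\beta>1/2$, so the subordinate solution is square-summable) or a half-bound state (when $\beta\leq 1/2$), contradicting the hypothesis and completing the proof.
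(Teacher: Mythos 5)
Your proposal is correct and follows essentially the same route as the paper: identify \eqref{system for v-hat+} with the model system via $\beta=\frac{|c|}{4\sin\omega_1}$, $\varepsilon=2(\phi-\omega_1)$, apply Proposition \ref{prop matrix products} together with the Weyl--Titchmarsh formula \eqref{rho' in U_+}, and use the common-kernel statement plus Theorem \ref{thm asymptotics of GEV} to translate $\hat p^+_1(\omega_1)\in\ker\Phi_0$ into the eigenvalue/half-bound-state alternative. You carry out the $\lambda=2\cos\phi$ change of variables and the resulting constants slightly more explicitly than the paper does, but the argument is the same.
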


\begin{proof}
Consider the neighbourhood of the critical point $2\cos\omega_1$ and $\phi\in U_+$. Take
\begin{equation*}
    \varepsilon:=2(\phi-\omega_1),
\end{equation*}
see \eqref{system for v-hat+}. Lemma \ref{lem Weyl-Titchmarsh formula} yields in the notation of Proposition \ref{prop matrix products}:
\begin{multline*}
    \rho'(\lambda)=\rho'(2\cos\phi)=\rho'(2\cos(\omega_1+\varepsilon/2))
    \\
    =\frac1{\pi\sin(\omega_1+\varepsilon/2)}
    \frac1{
    \left\|
    \Phi\left(\frac{|c|}{4\sin\omega_1},\varepsilon,\hat R^+\right)
    \hat p^+_1(\omega_1+\varepsilon/2)
    \right\|^2}.
\end{multline*}
Now the asymptotics of $\rho'(2\cos(\omega_1+\varepsilon/2))$ as $\varepsilon\rightarrow\pm0$ follow from Proposition \ref{prop matrix products} due to the continuity of $\hat p^+_1(\phi)$, if only $$\hat p^+_1(\omega_1)\notin\text{Ker}\,\Phi_0\left(\frac{|c|}{4\sin\omega_1},\hat R^+\right).$$
The latter by the definition of $\Phi_0$ means  that $$\lim\limits_{\ninf}\frac{\hat p^+_n(\omega_1)}{n^{\frac{|c|}{4\sin\omega_1}}}\neq0.$$
Due to the relation \eqref{p-hat+} and Theorem \ref{thm asymptotics of GEV} this in turn means that orthogonal polynomials at the point $2\cos\omega_1$ are not $O\left(n^{-\frac{|c|}{4\sin\omega_1}}\right)$ as $\ninf$, see the formulas \eqref{asymptotics at omega} and \eqref{asymptotics at -omega}.
In the opposite case, the point $2\cos\omega_1$ is either an eigenvalue of $\mathcal J$ (if $\frac{|c|}{4\sin\omega_1}>\frac12$) or a half-bound state (if $\frac{|c|}{4\sin\omega_1}\le\frac12$). Since $\sin\omega_1=|\sin\omega|$, this proves the result for the critical point $2\cos\omega_1$. The proof for the second critical point $-2\cos\omega_1$ can be obtained by changing $\omega_1$ to $\pi-\omega_1$ and $+$ to $-$ in the notation.
\end{proof}

\subsection*{Acknowledgements}
The author expresses his deep gratitude to Prof. S.N. Naboko for his constant attention to this work and for many fruitful discussions of the subject, to Prof. Jan Janas for invaluable help in the work and to Prof. Harald Woracek for many important remarks and suggestions. The work was supported by the Chebyshev Laboratory (Department of Mathematics and Mechanics, Saint-Petersburg State University) under the grant 11.G34.31.0026 of the Government of the Russian Federation, by grants RFBR-09-01-00515-a and 11-01-90402-Ukr\_f\_a and by the Erasmus Mundus Action 2 Programme of the European Union.


\begin{thebibliography}{10}

\bibitem{Akhiezer-1965}
N.I. Akhiezer.
\newblock {\em {The classical moment problem and some related questions in
  analysis}}.
\newblock Oliver \& Boyd, 1965.

\bibitem{Albeverio-1972}
S.~Albeverio.
\newblock {On bound states in the continuum of $N$-body systems and the virial
  theorem}.
\newblock {\em Ann. Physics}, 71:167--276, 1972.

\bibitem{Behncke-1991-I}
H.~Behncke.
\newblock {Absolute continuity of Hamiltonians with von Neumann Wigner
  potentials I}.
\newblock {\em Proc. Amer. Math. Soc.}, 111:373--384, 1991.

\bibitem{Behncke-1991-II}
H.~Behncke.
\newblock {Absolute continuity of Hamiltonians with von Neumann Wigner
  potentials II}.
\newblock {\em Manuscripta Math.}, 71(1):163--181, 1991.

\bibitem{Behncke-1994}
H.~Behncke.
\newblock {The m-function for Hamiltonians with Wigner-von Neumann potentials}.
\newblock {\em J. Math. Phys.}, 35(4):1445--1462, 1994.

\bibitem{Benzaid-Lutz-1987}
Z.~Benzaid and D.A. Lutz.
\newblock {Asymptotic representation of solutions of perturbed systems of
  linear difference equations}.
\newblock {\em Stud. Appl. Math.}, 77(3):195--221, 1987.

\bibitem{Birman-Solomyak-1987}
M.S. Birman and M.Z. Solomyak.
\newblock {\em {Spectral theory of selfadjoint operators in Hilbert space}}.
\newblock Kluwer, Dordrecht, 1987.

\bibitem{Burd-Nesterov-2010}
V.~Burd and P.~Nesterov.
\newblock Parametric resonance in adiabatic oscillators.
\newblock {\em Results Math.}, 58(1-2):1--15, 2010.

\bibitem{Deift-Killip-1999}
P.~Deift and R.~Killip.
\newblock {On the Absolutely Continuous Spectrum of One-Dimensional
  Schr{\"o}dinger Operators with Square Summable Potentials}.
\newblock {\em Comm. Math. Phys.}, 203(2):341--347, 1999.

\bibitem{Harris-Lutz-1975}
W.A. Harris and D.A Lutz.
\newblock {Asymptotic integration of adiabatic oscillators}.
\newblock {\em J. Math. Anal. Appl.}, 51:76--93, 1975.

\bibitem{Hinton-Klaus-Shaw-1991}
D.B. Hinton, M.~Klaus, and J.K. Shaw.
\newblock {Embedded half-bound states for potentials of Wigner-von Neumann
  type}.
\newblock {\em Proc. Lond. Math. Soc.}, 3(3):607--646, 1991.

\bibitem{Janas-Simonov-2010}
J.~Janas and S.~Simonov.
\newblock {Weyl-Titchmarsh type formula for discrete Schr\"odinger operator
  with Wigner-von Neumann potential}.
\newblock {\em Studia Math.}, 201(2):167--189, 2010.
\newblock arXiv:1003.3319.

\bibitem{Klaus-1991}
M.~Klaus.
\newblock {Asymptotic behavior of Jost functions near resonance points for
  Wigner-von Neumann type potentials}.
\newblock {\em J. Math. Phys.}, 32:163--174, 1991.

\bibitem{Kodaira-1949}
K.~Kodaira.
\newblock {The eigenvalue problem for ordinary differential equations of the
  second order and Heisenberg's theory of $S$-matrices}.
\newblock {\em Amer. J. Math.}, 71(4):921--945, 1949.

\bibitem{Kurasov-Naboko-2007}
P.~Kurasov and S.~Naboko.
\newblock {Wigner-von Neumann perturbations of a periodic potential: spectral
  singularities in bands}.
\newblock {\em Math. Proc. Cambridge Philos. Soc.}, 142(01):161--183, 2007.

\bibitem{Kurasov-Simonov-2011}
P.~Kurasov and S.~Simonov.
\newblock {Weyl-Titchmarsh type formula for periodic Schr\"odinger operator
  with Wigner-von Neumann potential}.
\newblock {\em Preprints in Mathematical Sciences, Lund University}, 6:1--26,
  2010.
\newblock arXiv:1102.5213.

\bibitem{Lukic-2011}
M.~Lukic.
\newblock {Orthogonal polynomials with recursion coefficients of generalized
  bounded variation}.
\newblock {\em Comm. Math. Phys.}, 306:485--509, 2011.
\newblock arXiv:1008.3844.

\bibitem{Matveev-1973}
V.B. Matveev.
\newblock {Wave operators and positive eigenvalues for a Schr{\"o}dinger
  equation with oscillating potential}.
\newblock {\em Theoret. Math. Phys.}, 15(3):574--583, 1973.

\bibitem{Naboko-Simonov-2011}
S.~Naboko and S.~Simonov.
\newblock {Zeroes of the spectral density of the periodic Schr\"odinger
  operator with Wigner-von Neumann potential}.
\newblock {\em To appear in Math. Proc. Cambridge Philos. Soc.}
\newblock arXiv:1102.5207.

\bibitem{Nesterov-2010}
P.~Nesterov.
\newblock {Parametric resonance in some dynamic equations on time scales}.
\newblock {\em Int. J. Difference Equ.}, 5(2):217--231, 2010.

\bibitem{Simonov-2009}
S.~Simonov.
\newblock {Weyl-Titchmarsh type formula for Hermite operator with small
  perturbation}.
\newblock {\em Opuscula Math.}, 29(2):187--207, 2009.
\newblock arXiv:1003.3596.

\bibitem{Titchmarsh-1946-1}
E.C. Titchmarsh.
\newblock {\em {Eigenfunction expansions associated with second-order
  differential equations. Part I}}.
\newblock Clarendon Press, Oxford, 1946.

\bibitem{Lotoreichik-Simonov-2012}
Lotoreichik V. and Simonov S.
\newblock {Spectral analysis of the Kronig-Penney model with Wigner-von Neumann
  perturbation}.
\newblock {\em Preprint}.
\newblock arXiv:1112.4717.

\bibitem{Wigner-von-Neumann-1929}
J.~von Neumann and E.P. Wigner.
\newblock {{\"U}ber merkw{\"u}rdige diskrete Eigenwerte}.
\newblock {\em Z. Phys.}, 30:465--467, 1929.

\end{thebibliography}
\end{document}